\documentclass[11pt,a4paper]{amsart}
\usepackage[T1]{fontenc}
\usepackage[utf8]{inputenc}
\usepackage{lmodern}
\usepackage[margin=28mm]{geometry}
\usepackage{mathtools,amssymb,microtype}
\usepackage[hidelinks]{hyperref}
\usepackage{xcolor}
\newtheorem{theorem}{Theorem}[section]

\newtheorem{conjecture}[theorem]{Conjecture}

\newtheorem{thmintroduction}{Theorem}
\newtheorem{corintroduction}[thmintroduction]{Corollary}
\newtheorem{conjectureintro}{Conjecture}

\numberwithin{equation}{section}
\DeclareMathOperator{\Ext}{Ext}
\DeclareMathOperator{\Hom}{Hom}
\DeclareMathOperator{\End}{End}
\DeclareMathOperator{\im}{im}
\DeclareMathOperator{\id}{id}
\DeclareMathOperator{\rank}{rank}
\DeclareMathOperator{\tr}{tr}
\newcommand{\HH}{\mathrm H}
\newcommand{\Hoch}{\mathrm{HH}}
\newcommand{\Z}{\mathbb Z}
\newcommand{\Q}{\mathbb Q}
\newcommand{\Fp}{\mathbb F_p}
\title[Nonvanishing of cohomology for groups]{Nonvanishing of cohomology for groups}
\author{Tiago Cruz}
\address{Tiago Cruz, Institut f\"ur Algebra und Zahlentheorie,
Universit\"at Stuttgart, Germany}
\email{tiago.cruz@mathematik.uni-stuttgart.de}
\author{Ren\'e Marczinzik}
\address{Ren\'e Marczinzik, Mathematical Institute of the University of Bonn,
Endenicher Allee 60, 53115 Bonn, Germany}
\email{marczire@math.uni-bonn.de}
\subjclass[2020]{Primary 20J06; Secondary 20C20, 16E30}
\keywords{Integral group cohomology, group homology,
 selfextensions, group algebras,
Hochschild cohomology}
\date{\today}

\begin{document}
\begin{abstract}
Let $G$ be a finite group. We prove that the order of the
abelianization $G/G'$ divides the order of $\HH^{2m}(G,\Z)$
for every $m\geq1$. As a consequence, for a field $K$, nonvanishing
of $\Ext^1_{KG}(K,K)$ implies nonvanishing of
$\Ext^n_{KG}(K,K)$ for every $n\geq1$. 
This answers a conjecture by Erdmann, Kl\'asz and Marczinzik.
We use this to show that the Hochschild cohomology of $KG$ is nonzero
in every nonnegative degree whenever the characteristic of $K$
divides $|G|$.
\end{abstract}
\maketitle

\section*{Introduction}
The cohomology of finite groups with integral or field coefficients is a classical object of study in mathematics that connects various areas of algebra and topology. For a finite
group $G$ and a commutative ring $R$, the natural isomorphism
\[
 \HH^*(G,R)=\Ext^*_{RG}(R,R)\cong\HH^*(BG,R)
\]
identifies the Yoneda product on the selfextensions of the trivial
module $R$ with the cup product on the cohomology of the classifying
space $BG$, see for example \cite{Ben2}. For $R=\Z$, the groups
$\HH^n(G,\Z)$ are finite in every positive degree, so their orders
and torsion structure provide arithmetic invariants of $G$.
The integral cohomology ring also contains the Chern classes of
complex representations and is related to topological $K$-theory
through the work of Atiyah \cite{Ati}. Reduction of coefficients
modulo primes relates these integral groups to modular cohomology.

Over a field $K$ of characteristic $p>0$, finite generation
\cite{Eve} makes the graded-commutative ring $\HH^*(G,K)$ accessible
to commutative algebra and algebraic geometry. Quillen's work
\cite{Qui1,Qui2} describes its spectrum in terms of
elementary abelian $p$-subgroups; in particular, its Krull dimension
is the largest rank of such a subgroup. 

In positive characteristic, the ring structure is fundamental to
modular representation theory. Tensoring extensions of the trivial
module with a finite-dimensional $KG$-module $M$ gives a natural action of
$\HH^*(G,K)$ on $\Ext^*_{KG}(M,M)$. The resulting support theory
detects projectivity and measures the growth of minimal projective
resolutions \cite{Car,Ben2}. At the level of triangulated
categories, Benson, Iyengar and Krause \cite{BIK} classify the
tensor-ideal localising subcategories of the stable category of all
$KG$-modules by subsets of $\operatorname{Proj}\HH^*(G,K)$. These results explain
why the selfextensions of a single module, the trivial module,
provide a geometric framework for studying the entire module
category.

In \cite[Conjecture K~2.11]{EKM}, Erdmann, Kl\'asz and Marczinzik gave the following conjecture:
\begin{conjectureintro}
Let $G$ be a finite group with group algebra $KG$ over the field $K$. Then $\Ext^1_{KG}(K,K)\neq0$ implies
$\Ext^n_{KG}(K,K)\neq0$ for all $n\geq1$.
\end{conjectureintro}
In \cite{EKM} this conjecture and several extensions were proven for special cases such as when $KG$ is representation-finite or of tame representation type.

We will prove the previous conjecture in this article as a corollary of our main result, which gives the following surprising result for integral group (co)homology:
\begin{thmintroduction}\label{thm:main}
Let $G$ be a finite group. Then, for every $m\geq1$,
\[
 |G/G'|\ \bigm|\ |\HH^{2m}(G,\Z)|
       =|\HH_{2m-1}(G,\Z)|.
\]
\end{thmintroduction}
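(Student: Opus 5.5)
The plan is to prove the divisibility one prime at a time, by exhibiting inside $\HH^{2m}(G,\Z)$ a subgroup isomorphic to $\HH^2(G,\Z)\cong\Hom(G,\mathbb Q/\Z)$. The latter is the Pontryagin dual of $G/G'$ and so has order $|G/G'|$. The case $m=1$ is exactly this isomorphism, which comes from $\HH^1(G,\mathbb Q/\Z)\cong\HH^2(G,\Z)$ via the Bockstein. The equality $|\HH^{2m}(G,\Z)|=|\HH_{2m-1}(G,\Z)|$ is the universal coefficient theorem: all positive-degree integral (co)homology of $G$ is finite, so $\HH^{2m}(G,\Z)\cong\Ext^1(\HH_{2m-1}(G,\Z),\Z)$, which is non-canonically isomorphic to $\HH_{2m-1}(G,\Z)$.

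To get an additive map into degree $2m$, I would cup with a fixed class: $\Phi_u\colon \HH^2(G,\Z)\to\HH^{2m}(G,\Z)$, $x\mapsto x\cdot u$ for some $u\in\HH^{2m-2}(G,\Z)$. Fix a prime $p$ and work with $p$-primary parts. It then suffices to show that $\Phi_u$ is injective on $\HH^2(G,\Z)_{(p)}=\Hom(G,\mathbb Q/\Z)_{(p)}$.

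The detection step is as follows. A homomorphism $\chi\colon G\to\mathbb Q/\Z$ of $p$-power order that kills every $p$-element is trivial. This is because the $p$-elements generate a normal subgroup $N$ with $G/N$ a $p'$-group, so $\chi$ factors through a group of order prime to $p$ while itself having $p$-power order. Hence $\HH^2(G,\Z)_{(p)}$ is detected by restriction to the cyclic $p$-subgroups $C\le G$. For such a $C$ we have $\HH^*(C,\Z)=\Z[t]/(|C|t)$ with $|t|=2$. So if $\operatorname{res}_C(u)$ is a unit multiple of $t^{m-1}$, then $\operatorname{res}_C(x\cdot u)=a\,t\cdot(\text{unit})\,t^{m-1}$ vanishes iff $\operatorname{res}_C(x)=a t$ vanishes. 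Injectivity of $\Phi_u$ on the $p$-part follows, and multiplying over all primes $p$ gives $|G/G'|\mid|\HH^{2m}(G,\Z)|$.

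The main obstacle is producing $u\in\HH^{2m-2}(G,\Z)$ whose restriction to every cyclic $p$-subgroup is a generator of $\HH^{2m-2}(C,\Z)\cong\Z/|C|$, and doing so in every even degree, not just in multiples of some period. My first attempt would use Chern classes of complex representations. For a faithful representation $\rho$, restricted to $C$, the Chern class $c_k(\rho)$ restricts to the $k$-th elementary symmetric function of the characters $\lambda_i t$. By choosing $\rho$ as a sum of copies of a suitable induced representation, one can arrange that this symmetric function is prime to $p$ times $t^k$; Evens' norm map $N_{P\to G}$ applied to a generator-like class on a Sylow $P$ is the alternative tool here. If a uniform $u$ is not available, the fallback is to prove injectivity character by character. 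For $\chi$ of order $p^a$ pick a $p$-element $g$ with $\chi(g)$ of order $p^a$, and choose $u$ depending only on $\langle g\rangle$. Then assemble a subgroup of order $|\HH^2(G,\Z)_{(p)}|$ via a filtration of $\Hom(G,\mathbb Q/\Z)_{(p)}$ by cyclic layers, showing at each layer that the new generator cup $u$ is independent of the earlier images by restricting to the corresponding cyclic subgroup.
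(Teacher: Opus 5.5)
There is a genuine gap, and it lies in the strategy itself rather than only in the construction of $u$. You try to prove that $\HH^2(G,\Z)$ embeds in $\HH^{2m}(G,\Z)$, or at least that products $x\cdot u$ with $x\in\HH^2(G,\Z)$ generate a subgroup of order $|G/G'|$. Both are false in general.

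Take $G=Q_8$ and $m=2$. Then $\HH^2(Q_8,\Z)\cong\Hom(Q_8,\mathbb Q/\Z)\cong(\Z/2)^2$. Since $Q_8$ has periodic cohomology of period $4$, $\HH^4(Q_8,\Z)\cong\Z/|Q_8|=\Z/8$. A subgroup of a cyclic group is cyclic, so no $\Phi_u$ can be injective.

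Worse, $2x=0$ for every $x\in\HH^2(Q_8,\Z)$. Hence every product $x\cdot u$ lies in $\{0,4\}\subset\Z/8$, a group of order $2$. So your fallback also cannot work: building a subgroup of order $|\HH^2(G,\Z)_{(2)}|=4$ from layer-by-layer products $\chi_i\cdot u_i$ fails, because the required independence at the second layer is impossible. The theorem holds here ($4\mid 8$) only as a statement about orders. The subgroup of order $4$ in $\HH^4$ is simply not reached by products with degree-two classes.

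The same example shows that the uniform class you flagged as the main obstacle does not exist. Any $u\in\HH^2(Q_8,\Z)$ restricts to an element of order at most $2$ in $\HH^2(\langle i\rangle,\Z)\cong\Z/4$. Your first paragraph is correct: the universal coefficient comparison, the case $m=1$, and detection of $\HH^2(G,\Z)_{(p)}$ on cyclic $p$-subgroups. But no cup-product argument can finish the proof.

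What is missing is a mechanism that controls only the order, and this is what the paper supplies by linear algebra. It takes $G'$-coinvariants of the bar resolution to get a complex $C_\bullet$ of free $\Z Q$-modules, $Q=G/G'$. After tensoring with $\mathbb C$, this complex is acyclic in positive degrees, and its degree-zero homology is $\mathbb C$ with trivial $Q$-action. Splitting into character components gives rank identities $s_i(1)-s_i(\chi)=(-1)^i$ for $\chi\neq1$. So in degree $2m$ the trivial component has rank $t$ and every nontrivial component has rank $t-1$.

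Consequently every $t\times t$ minor of the $\Z Q$-matrix $D_{2m}$ vanishes under all nontrivial characters. It is therefore an integer multiple of the norm element of $\Z Q$, and its augmentation is divisible by $q=|Q|$. The Smith normal form then gives $q\mid|\operatorname{tors}\operatorname{coker}B_{2m}|=|\HH_{2m-1}(G,\Z)|$. No comparison of $\HH^2$ and $\HH^{2m}$ as groups is ever made, which is why the argument survives examples like $Q_8$.
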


Here $\HH_i(G,\Z)$ denotes integral group homology and $G'$ denotes the commutator subgroup of $G$. Since
$\HH_1(G,\Z)\cong G/G'$, the equality of orders in
Theorem~\ref{thm:main}, applied with $m=1$, gives
$|\HH^2(G,\Z)|=|\HH_1(G,\Z)|=|G/G'|$. Thus the theorem
can also be expressed as the two divisibilities
\[
 |\HH^2(G,\Z)|\mid|\HH^{2m}(G,\Z)|,
 \qquad
 |\HH_1(G,\Z)|\mid|\HH_{2m-1}(G,\Z)|
 \qquad(m\geq1).
\]

The proof of Theorem~\ref{thm:main} combines arguments from group theory and character theory with methods from homological algebra and linear algebra over the integers.

We now turn to group algebras over a field. We may assume that
the characteristic is positive, since group algebras in
characteristic zero are semisimple and their positive-degree
cohomology vanishes.

\begin{corintroduction}\label{cor:modules}
Let $G$ be a finite group and let $K$ be a field of characteristic
$p>0$ such that $\Ext^1_{KG}(K,K)\neq0$. Then the following hold.
\begin{enumerate}
\item $\Ext^n_{KG}(K,K)\neq0$ for every $n\geq1$.
\item If $M$ is a finite-dimensional $KG$-module and
$p\nmid\dim_K M$, then $\Ext^n_{KG}(M,M)\neq0$ for every $n\geq1$.
\end{enumerate}
\end{corintroduction}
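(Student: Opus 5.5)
We deduce the corollary from Theorem~\ref{thm:main} via the universal coefficient theorem and a transfer argument. First note that $\Ext^1_{KG}(K,K)\cong\HH^1(G,K)\cong\Hom(G,K)=\Hom(G/G',K^+)$. Since $K^+$ is an elementary abelian $p$-group (a vector space over $\Fp$), this is nonzero if and only if $p$ divides $|G/G'|$.

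For part (1), fix $n\geq1$ and compute $\HH^n(G,K)$ by the universal coefficient theorem over $\Z$:
\[
 \HH^n(G,K)\cong\Hom(\HH_n(G,\Z),K)\oplus\Ext^1_\Z(\HH_{n-1}(G,\Z),K).
\]
If $n=2m-1$ is odd, Theorem~\ref{thm:main} gives $|G/G'|\mid|\HH_{2m-1}(G,\Z)|$. Hence $p$ divides the order of the finite abelian group $\HH_n(G,\Z)$, which therefore has a $\Z/p$ quotient, and so $\Hom(\HH_n(G,\Z),K)\neq0$. If $n=2m$ is even, then $n-1=2m-1$ is odd, and the same divisibility shows that $\HH_{n-1}(G,\Z)$ has nontrivial $p$-torsion. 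For a finite abelian group $A$ we have $\Ext^1_\Z(A,K)\cong A/pA\otimes_{\Fp}K$ up to identification, since $\Ext^1_\Z(\Z/q,K)=K/qK$. This is nonzero whenever $p\mid|A|$. In either parity the right-hand side is nonzero, so $\Ext^n_{KG}(K,K)\neq0$.

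An alternative route for even degrees is $\HH^{2m}(G,\Z)\otimes K\hookrightarrow\HH^{2m}(G,K)$ (universal coefficients for cohomology), together with $|G/G'|\mid|\HH^{2m}(G,\Z)|$. The homology version above already covers both parities uniformly, so we use it.

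For part (2), we use the standard splitting argument. Let $M$ be finite-dimensional with $p\nmid\dim_K M$. The composite
\[
 K\xrightarrow{\ \eta\ }M\otimes_K M^*\cong\End_K(M)\xrightarrow{\ \tr\ }K
\]
is multiplication by $\dim_K M$, which is invertible in $K$. Here $\eta$ sends $1$ to $\id_M$, and both maps are $KG$-homomorphisms. Hence $K$ is a direct summand of $M\otimes M^*$ as a $KG$-module. By adjunction,
\[
 \Ext^n_{KG}(M,M)\cong\Ext^n_{KG}(K,M\otimes M^*),
\]
and this contains $\Ext^n_{KG}(K,K)$ as a direct summand, which is nonzero by part (1).

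The only delicate step is the universal coefficient bookkeeping in part (1). There we must make sure that in odd degrees the relevant nonvanishing comes from $\HH_{2m-1}$ itself, and in even degrees from the $\Ext^1_\Z$ term attached to $\HH_{2m-1}$. Theorem~\ref{thm:main} controls exactly the odd homology, so it fits both cases. Everything else is standard.
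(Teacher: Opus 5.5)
Your proof is correct. Part (2) is the same trace-splitting argument as in the paper, but part (1) takes a different route.

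The paper stays on the cohomology side. It uses the coefficient sequence $0\to\Z\xrightarrow{p}\Z\to\Fp\to0$ to obtain $0\to\HH^j(G,\Z)/p\HH^j(G,\Z)\to\HH^j(G,\Fp)\to\HH^{j+1}(G,\Z)[p]\to0$. Even degrees are then detected by the mod-$p$ reduction of $\HH^{2m}(G,\Z)$, and odd degrees by the $p$-torsion of that same group. The paper then needs a separate extension-of-scalars (faithful flatness) step to pass from $\Fp$ to an arbitrary field $K$.

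You instead apply the universal coefficient theorem with coefficients in $K$ directly. This is valid because $\Hom_{\Z G}(P_\bullet,K)\cong\Hom_\Z(P_\bullet\otimes_{\Z G}\Z,K)$ with $P_\bullet\otimes_{\Z G}\Z$ free abelian. Both parities are then read off the single group $\HH_{2m-1}(G,\Z)$: the $\Hom$ term handles odd degree and the $\Ext^1_\Z$ term handles even degree.

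Your route has two advantages. It uses the homology divisibility $|G/G'|\mid|\HH_{2m-1}(G,\Z)|$, which is exactly what the paper's proof of Theorem~\ref{thm:main} establishes directly. It also reuses the setup of Proposition~\ref{prop:integral-comparison} with $K$ in place of $\Z$, so you avoid both the Bockstein sequence and the base change from $\Fp$ to $K$. The paper's route in exchange stays within cohomology and needs no universal coefficient theorem beyond the integral one already proved.

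Your side remark $\HH^{2m}(G,\Z)\otimes K\hookrightarrow\HH^{2m}(G,K)$ is just the left half of the paper's sequence. The paper handles odd degrees with the right-hand torsion term, which is the cohomological counterpart of your $\Hom(\HH_{2m-1}(G,\Z),K)$ term.
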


The first assertion is a proof of the above 
conjecture due to Erdmann, Kl\'asz and Marczinzik. The second assertion makes the consequence in \cite[Proposition 2.12]{EKM} unconditional and works over any field of characteristic $p$.

Our main result also has new consequences for Hochschild cohomology of finite groups. For a
$K$-algebra $B$, write
$\Hoch^n(B)=\Ext^n_{B\otimes_K B^{\mathrm{op}}}(B,B)$ for the $n$-th Hochschild cohomology of $B$.
The Hochschild cohomology ring $\Hoch^*(B)$ connects representation
theory with deformation theory and the study of derived categories.
We refer to Witherspoon \cite{Wit} for a systematic treatment of
Hochschild cohomology and its applications.
Its cup product and Gerstenhaber bracket give it a Gerstenhaber
algebra structure \cite{Ger63}; its second cohomology classifies
infinitesimal associative deformations up to equivalence, while its
third cohomology contains the obstructions to extending them \cite{Ger64}.
The graded Hochschild cohomology ring is preserved by derived equivalences \cite{Ric},
so it can distinguish algebras with different derived module
categories. For a group algebra $KG$, the group cohomology ring
$\HH^*(G,K)$ embeds naturally as a graded subalgebra of
$\Hoch^*(KG)$. 
Our next result shows non-vanishing of all terms of the Hochschild cohomology of finite groups.
This result is obtained by combining Corollary~\ref{cor:modules} with the centraliser decomposition of Hochschild cohomology and the main result of \cite{FJL}.

\begin{corintroduction}\label{cor:hochschild}
Let $G$ be a finite group and let $K$ be a field of characteristic
$p>0$ such that $p\mid|G|$. Then
\[
 \Hoch^n(KG)\neq0\qquad\text{for every }n\geq0.
\]
\end{corintroduction}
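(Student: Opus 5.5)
We use the centraliser decomposition of Hochschild cohomology. For a group algebra there is an additive isomorphism
\[
 \Hoch^n(KG)\cong\bigoplus_{g}\HH^n(C_G(g),K),
\]
where $g$ runs over a set of representatives of the conjugacy classes of $G$. It therefore suffices, for each $n\geq0$, to find one class $g$ with $\HH^n(C_G(g),K)\neq0$.

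The degree $n=0$ is immediate, since $\HH^0(C_G(g),K)=K$. This also reflects that $\Hoch^0(KG)=Z(KG)\neq0$.

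For $n\geq1$, the plan is to find a single subgroup $H=C_G(g)$ with $\Ext^1_{KH}(K,K)\neq0$. Once we have it, Corollary~\ref{cor:modules}(1), applied to $H$, gives $\HH^n(H,K)\neq0$ for all $n\geq1$, and we are done. Recall that $\Ext^1_{KH}(K,K)\cong\Hom(H,K^+)\cong\Hom(H/H'\otimes\Fp,K)$, so the condition is equivalent to $p\mid|H/H'|$, i.e.\ $H$ has a nontrivial abelian $p$-quotient.

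The natural candidate is to take $g$ a nontrivial central element of a Sylow $p$-subgroup $P$; such $g$ exists because $p\mid|G|$, so $P\neq1$. Then $P\le C_G(g)=H$, so $p\mid|H|$, but this alone does not force $p\mid|H/H'|$. We can refine the choice of $g$ so that $g\in Z(H)$ and $g$ has $p$-power order, making $\langle g\rangle$ a central $p$-subgroup of $H$. Even so, $H$ might be perfect modulo $p$; a central extension of a perfect group is one obstruction. This is where the main result of \cite{FJL} enters, and I expect it to be the main obstacle.

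My reading is that \cite{FJL} proves one of the following two statements:
\begin{itemize}
\item some element $g$ of $G$ has centraliser $C_G(g)$ with a nontrivial $p$-quotient, i.e.\ $O^p(C_G(g))\neq C_G(g)$; this is the statement that $\Hoch^1(KG)\neq0$ whenever $p\mid|G|$;
\item more precisely, that $\HH^1(C_G(g),K)\neq0$ for some $g$.
\end{itemize}
Since $\Hoch^1(KG)=\bigoplus_g\HH^1(C_G(g),K)$, either version yields a class $g$ with $\Ext^1_{KC_G(g)}(K,K)\neq0$. The centraliser decomposition applied in degree $1$ thus converts the result of \cite{FJL} directly into the required hypothesis on $H=C_G(g)$.

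Combining this with Corollary~\ref{cor:modules}(1) applied to $H$, the same summand $\HH^n(C_G(g),K)$ is nonzero in every degree $n\geq1$, and hence $\Hoch^n(KG)\neq0$ for all $n\geq0$. The remaining care is in matching the field: the result of \cite{FJL} may be stated over $\Fp$ or over a splitting field, but $\HH^1(H,K)\cong\HH^1(H,\Fp)\otimes_{\Fp}K$, so nonvanishing is independent of the field of characteristic $p$.
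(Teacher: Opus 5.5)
Your proposal is correct and follows the paper's argument. The paper takes from \cite{FJL} an element $x$ whose order is divisible by $p$ and whose $p$-part is not in $C_G(x)'$, and deduces $p\mid|C_G(x)/C_G(x)'|$, hence $\HH^1(C_G(x),K)\neq0$. It then applies Corollary~\ref{cor:modules}(1) to $C_G(x)$ and feeds the result into the centraliser decomposition, handling degree $0$ via $Z(KG)\neq0$; your hedged readings of \cite{FJL} are exactly what this precise statement delivers.
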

We recall the necessary preliminaries in Section 1 and prove the main results in Section 2. 
For standard background on representation theory and homological
algebra, we refer for example to the textbooks  \cite{Ben,Ben2,Rot,Wei,Zim}. For background on integral and modular group cohomology, we refer
to the books \cite{AM,Ben,Ben2,Bro,CTVZ}.

\section{Preliminaries}
\newtheorem{lemma}[theorem]{Lemma}
\newtheorem{proposition}[theorem]{Proposition}
\theoremstyle{definition}
\newtheorem{definition}[theorem]{Definition}
\theoremstyle{plain}
All groups in this article are finite and we use right modules. We write $G'=[G,G]$ for the commutator subgroup of a finite group $G$ and for a finitely generated abelian group $T$, its torsion subgroup
is denoted by $\operatorname{tors}(T)$. For a commutative ring $R$
and a right $RG$-module $X$, we write
\[
 \HH^n(G,X)=\Ext^n_{RG}(R,X),
\]
where $R$ is the trivial module. Integral group homology is
$\HH_n(G,\Z)=\operatorname{Tor}^{\Z G}_n(\Z,\Z)$, with
trivial coefficients on both sides. For a chain complex
$Y_\bullet$ with differential $b_i:Y_i\to Y_{i-1}$, we use
$\HH_i(Y_\bullet)=\ker b_i/\im b_{i+1}$. Given a natural number $n$, we denote by $C_n$ the cyclic group of order $n$.

We recall the standard results needed for Theorem~\ref{thm:main},
giving references for their proofs. The next definition and lemma use the right-module version of the
homogeneous bar resolution in \cite[\S3.4, pp.~63--64]{Ben}.

\begin{definition}\label{def:bar-resolution}
For $i\geq0$, let $P_i=\Z[G^{i+1}]$, with right $G$-action
\[
 (g_0,\ldots,g_i)g=(g_0g,\ldots,g_ig).
\]
For $i\geq1$, the differential is
\[
 \partial_i(g_0,\ldots,g_i)=
 \sum_{j=0}^i(-1)^j
 (g_0,\ldots,\widehat{g_j},\ldots,g_i),
\]
where a hat denotes omission. The augmentation
$\partial_0:P_0\to\Z$ sends every $(g_0)$ to $1$.
\end{definition}

The standard-resolution construction cited above gives the
following facts over any commutative coefficient ring.

\begin{lemma}\label{lem:bar-resolution}
The augmented complex $P_\bullet\to\Z$ is a free right
$\Z G$-resolution. The tuples ending in $1$ form a free basis,
so $P_i\cong(\Z G)^{r_i}$ with $r_i=|G|^i$.
For every commutative ring $R$, the augmented complex
$P_\bullet\otimes_\Z R\to R$ is a free right $RG$-resolution
of the trivial module.
\end{lemma}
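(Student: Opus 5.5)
The plan is to verify the three assertions of Lemma~\ref{lem:bar-resolution} directly from Definition~\ref{def:bar-resolution}. First, the complex property: $\partial_{i-1}\partial_i=0$ follows from the usual simplicial identity, since each term with positions $j<k$ omitted appears twice with opposite signs $(-1)^{j+k}$ and $(-1)^{j+k-1}$. Also $\partial_0\partial_1=0$, because $\partial_1(g_0,g_1)=(g_1)-(g_0)\mapsto 0$. The differentials are $G$-equivariant because the diagonal right action commutes with deleting coordinates.

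Second, freeness and rank. The right action of $G$ on $G^{i+1}$ by diagonal multiplication is free. Every orbit contains exactly one tuple whose last entry is $1$: the orbit of $(g_0,\dots,g_i)$ contains $(g_0g_i^{-1},\dots,g_{i-1}g_i^{-1},1)$, and two such tuples in one orbit differ by an element $g$ with $1\cdot g=1$. Since a permutation module on a free $G$-set is free with basis a set of orbit representatives, these tuples form a $\Z G$-basis. There are $|G|^i$ of them, so $r_i=|G|^i$.

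Third, exactness, which is the only step needing an idea, though it is standard. I would build a contracting homotopy of the augmented complex as abelian groups (not $G$-linearly). Define $s_{-1}\colon\Z\to P_0$ by $1\mapsto(1)$ and $s_i\colon P_i\to P_{i+1}$ by $(g_0,\dots,g_i)\mapsto(1,g_0,\dots,g_i)$. Expanding the alternating sum shows $\partial_{i+1}s_i+s_{i-1}\partial_i=\id$ for $i\geq0$: the $j=0$ term of $\partial_{i+1}s_i$ returns the identity, and the remaining terms cancel against $s_{i-1}\partial_i$. Similarly $\partial_0s_{-1}=\id_\Z$. Hence the augmented complex is contractible over $\Z$, in particular exact, and the first claim follows.

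Finally, for a commutative ring $R$, apply the additive functor $-\otimes_\Z R$. It preserves the contracting homotopy, so $P_\bullet\otimes_\Z R\to R$ is again exact (indeed split exact over $R$). Moreover $P_i\otimes_\Z R\cong R[G^{i+1}]\cong(RG)^{r_i}$ with the same basis. This gives a free right $RG$-resolution of the trivial module $R$. I expect no genuine obstacle; the only care needed is the sign bookkeeping in the homotopy identity.
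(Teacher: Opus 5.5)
Your proposal is correct. The sign cancellation for $\partial_{i-1}\partial_i=0$, the orbit-representative argument that makes the tuples ending in $1$ a basis, the $\Z$-linear contracting homotopy $(g_0,\dots,g_i)\mapsto(1,g_0,\dots,g_i)$, and the base change along $-\otimes_\Z R$ (which preserves the homotopy identities) together establish all three claims. The paper gives no proof of its own and defers to the standard-resolution construction in Benson, \S3.4; your argument is essentially that standard verification written out in full.
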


Unless an augmentation is explicitly mentioned, $P_\bullet$
means the unaugmented complex, with degree-zero differential zero
and degree-zero homology the trivial module.

\begin{definition}\label{def:coinvariants}
For a right $RG$-module $V$, its module of coinvariants is
\[
 V_G=V/\langle vg-v:v\in V,\ g\in G\rangle_R
 \cong V\otimes_{RG}R,
\]
where $R$ is the trivial left module.
\end{definition}
For the tensor description, apply for example the right-module version of
\cite[Proposition~2.3.8, pp.~144--145]{Lin} to the augmentation
ideal of $RG$, $I:=\langle g-1\colon g\in G\rangle_R$.

The exactness assertion in the next lemma is a consequence of Maschke's
theorem, since every short exact sequence of $FG$-modules splits;
see for example \cite[Corollary~3.6.12, p.~72]{Ben}.

\begin{lemma}\label{lem:averaging}
If $F$ is a field in which $|G|$ is invertible, the coinvariant
functor $V\mapsto V_G$ on right $FG$-modules is exact.
\end{lemma}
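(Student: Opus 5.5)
The statement to prove is Lemma~\ref{lem:averaging}: if $|G|$ is invertible in $F$, then $V\mapsto V_G$ is exact on right $FG$-modules. I would give two complementary arguments. One is a direct averaging argument that realises coinvariants as a direct summand. The other is the formal route through Maschke's theorem indicated before the statement.

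First, right exactness holds for free. By Definition~\ref{def:coinvariants} we have $V_G\cong V\otimes_{FG}F$, and tensor products are right exact. So only preservation of injections needs proof.

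For injections I would use the averaging idempotent
\[
e=\frac{1}{|G|}\sum_{g\in G}g\in FG,
\]
which is well defined because $|G|$ is invertible in $F$. Right multiplication by $e$ defines a natural endomorphism of each right $FG$-module $V$, and its image $Ve$ is the fixed-point space $V^G$. The key step is to show
\[
V=Ve\oplus V(1-e),\qquad V(1-e)=\langle vg-v : v\in V,\ g\in G\rangle_F .
\]
For the inclusion $\subseteq$ in the second equality, write $v(1-e)=\frac{1}{|G|}\sum_g (v-vg)$. For the reverse inclusion, note that $(vg-v)e=0$, so every $vg-v$ lies in the kernel of right multiplication by $e$, and this kernel equals $V(1-e)$ since $e$ is idempotent.

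It follows that the composite $V^G=Ve\hookrightarrow V\to V_G$ is an isomorphism, natural in $V$. The fixed-point functor is $\Hom_{FG}(F,-)$, which is left exact. Hence $V\mapsto V_G$ is naturally isomorphic to a functor that is both left exact and right exact, and is therefore exact.

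Alternatively, one can argue via Maschke's theorem. Every short exact sequence $0\to U\to V\to W\to0$ of $FG$-modules splits, and additive functors preserve split exact sequences. This already gives exactness of $-\otimes_{FG}F$. I would state this as the main line and keep the averaging computation as an explicit check.

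There is no real obstacle here. The only delicate point is the identification $\ker(\cdot e)=V(1-e)$, and that follows from $e^2=e$.
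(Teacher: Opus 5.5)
Your proposal is correct, and its main line (Maschke's theorem makes every short exact sequence of $FG$-modules split, so the additive functor $-\otimes_{FG}F$ preserves it) is exactly the paper's argument. Your averaging-idempotent argument, which identifies $V_G\cong V^G$ naturally so that the functor is both left and right exact, is also valid and self-contained, but you do not need it.
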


For a complex $X_\bullet$ of finite free right modules over
a ring $R$, choose a basis in each degree. By the
differential matrix in degree $i$, we mean the matrix
over $R$ representing the differential
$d_i \colon X_i \to X_{i-1}$ with respect to the chosen
bases of its source and target.

We will also need the following proposition.

\begin{proposition}\label{prop:normal-coinvariants}
Let $N\trianglelefteq G$, put $Q=G/N$ and $A=\Z Q$, and set
\[
 C_\bullet=P_\bullet\otimes_{\Z N}\Z,
 \qquad Y_\bullet=P_\bullet\otimes_{\Z G}\Z.
\]
Then $C_\bullet$ is a complex of free right $A$-modules, with
$Q$-action $[x](gN)=[xg]$, and
\[
 C_i\cong A^{r_i},\qquad
 C_\bullet\otimes_A\Z\cong Y_\bullet,\qquad
 Y_i\cong\Z^{r_i},\qquad r_i=|G|^i.
\]
Here $\Z$ is an $A$-module through the augmentation
\[
 \varepsilon:A\to\Z,\qquad
 \varepsilon\!\left(\sum_{u\in Q}a_u u\right)=\sum_{u\in Q}a_u.
\]
If $D_i$ denotes the differential matrix of $C_\bullet$ in
chosen $A$-bases, then the differential matrix of $Y_\bullet$
in the induced bases is $B_i=\varepsilon(D_i)$, with
$\varepsilon$ applied entry by entry. Moreover,
\[
 \HH_i(Y_\bullet)=\HH_i(G,\Z),\qquad
 \HH_i(C_\bullet\otimes_\Z\mathbb C)\cong
 \begin{cases}
 \mathbb C\text{ with trivial }Q\text{-action},&i=0,\\
 0,&i>0.
 \end{cases}
\]
\end{proposition}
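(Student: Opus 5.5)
The plan is to prove each assertion of the proposition in turn, starting from Lemma~\ref{lem:bar-resolution}.

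\emph{Freeness of $C_i$.} Since $P_i$ is a free right $\Z G$-module of rank $r_i$ on the tuples ending in $1$, we have $P_i\cong(\Z G)^{r_i}$. Hence
\[
 C_i=P_i\otimes_{\Z N}\Z\cong(\Z G\otimes_{\Z N}\Z)^{r_i}.
\]
Because $N$ is normal, $\Z G\otimes_{\Z N}\Z\cong\Z[N\backslash G]=\Z[G/N]=A$, with $Q$ acting by right multiplication. Concretely, the class $[x]$ of a tuple $x$ is acted on by $gN$ via $[x](gN)=[xg]$. This is well defined because $[xng]=[xg\,(g^{-1}ng)]=[xg]$, using $g^{-1}ng\in N$. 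The basis tuples ending in $1$ therefore give an $A$-basis of $C_i$, so $C_i\cong A^{r_i}$.

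\emph{Identification with $Y_\bullet$.} Associativity of tensor products gives
\[
 C_\bullet\otimes_A\Z=(P_\bullet\otimes_{\Z N}\Z)\otimes_{\Z Q}\Z\cong P_\bullet\otimes_{\Z G}\Z=Y_\bullet.
\]
This holds because $\Z_N\otimes_{\Z Q}\Z\cong\Z$ as $(\Z G,\Z)$-bimodules; equivalently, coinvariants for $N$ followed by coinvariants for $Q$ are coinvariants for $G$. In particular $Y_i\cong A^{r_i}\otimes_A\Z\cong\Z^{r_i}$, with the induced basis.

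\emph{Differential matrices.} If $d_i(e_k)=\sum_l e'_l\,a_{lk}$ with $a_{lk}\in A$ (or the transposed convention), then tensoring with $\Z$ over $\varepsilon$ sends $e'_l\,a\otimes1$ to $(e'_l\otimes1)\,\varepsilon(a)$. Hence the differential matrix of $Y_\bullet$ is $\varepsilon(D_i)$, entry by entry.

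\emph{Homology of $Y_\bullet$.} The complex $P_\bullet$ is a free resolution of $\Z$ over $\Z G$, so by definition $\HH_i(Y_\bullet)=\operatorname{Tor}^{\Z G}_i(\Z,\Z)=\HH_i(G,\Z)$.

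\emph{Homology of $C_\bullet\otimes_\Z\mathbb C$.} This is the main step. I would write
\[
 C_\bullet\otimes_\Z\mathbb C\cong(P_\bullet\otimes_\Z\mathbb C)\otimes_{\mathbb C N}\mathbb C=(P_\bullet\otimes_\Z\mathbb C)_N.
\]
By Lemma~\ref{lem:bar-resolution} with $R=\mathbb C$, the augmented complex $P_\bullet\otimes_\Z\mathbb C\to\mathbb C$ is exact. By Lemma~\ref{lem:averaging} applied to $N$ (with $F=\mathbb C$, where $|N|$ is invertible), the functor of $N$-coinvariants is exact. An exact functor preserves exactness of this complex, so the augmented complex
\[
 (P_\bullet\otimes\mathbb C)_N\to\mathbb C_N=\mathbb C
\]
is exact. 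Its homology therefore vanishes in positive degrees and equals $\mathbb C$ in degree $0$.

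It remains to check the $Q$-action on $\HH_0\cong\mathbb C$. The degree-$0$ augmentation sends every $[(g_0)]$ to $1$, and it is compatible with the $Q$-action since $[(g_0)]\,gN=[(g_0g)]$ is also sent to $1$. So the induced action of $Q$ on $\HH_0$ is trivial.

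The only subtlety is keeping left/right conventions straight in the tensor identifications. None of the steps looks like a genuine obstacle.
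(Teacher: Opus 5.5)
Your proposal is correct and follows essentially the same route as the paper: identify $(\Z G)_N\cong\Z Q$ to get $C_i\cong A^{r_i}$, use associativity of tensor products to get $C_\bullet\otimes_A\Z\cong Y_\bullet$, observe that base change along $\varepsilon$ applies $\varepsilon$ entrywise, and apply the exact $N$-coinvariant functor of Lemma~\ref{lem:averaging} to the complexified augmented bar resolution. The only difference is that you check some points directly, namely well-definedness of the $Q$-action and triviality of the $Q$-action on $\HH_0$, where the paper instead cites Linckelmann.
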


\begin{proof}
The quotient-group action and the identification
$(\Z G)_N\cong\Z Q$ follow from
\cite[Proposition~1.6.4, p.~41]{Lin}. Together with
\cite[Proposition~2.3.8, pp.~144--145]{Lin} and associativity
of tensor products, this gives the displayed descriptions of
$C_\bullet$ and $Y_\bullet$. Tensoring with the trivial
$A$-module applies $\varepsilon$ to each matrix entry.
The first homology identity is the definition of group homology
using the bar resolution. For the second, apply the exact
$N$-coinvariant functor of Lemma~\ref{lem:averaging} to the
complexified augmented resolution, and then omit its augmentation.
\end{proof}

\begin{definition}\label{def:characters}
For a finite abelian group $Q$ of order $q$, write
$\widehat Q=\Hom(Q,\mathbb C^\times)$ and denote the trivial
character by $1$. Each character extends to an algebra homomorphism
\[
 \chi:\mathbb C Q \to\mathbb C,\qquad
 \chi\!\left(\sum_{u\in Q}a_u u\right)=\sum_{u\in Q}a_u\chi(u).
\]
Evaluation at the trivial character is the augmentation.
\end{definition}

The next proposition is the abelian case of
\cite[Theorem~3.3.1 and Corollaries~3.3.2--3.3.3,
pp.~262--263]{Lin}.
\begin{proposition}\label{prop:fourier}
There are $q:=|Q|$ characters in $\widehat Q$, and evaluation gives
an algebra isomorphism
\[
 \mathbb C Q \longrightarrow\prod_{\chi\in\widehat Q}\mathbb C,
 \qquad a\longmapsto(\chi(a))_\chi.
\]
For $a=\sum_{u\in Q}a_u u$, its coefficients are recovered by
\begin{equation}\label{eq:arithmetic-fourier}
 a_u=\frac1q\sum_{\chi\in\widehat Q}\chi(a)\chi(u)^{-1}.
\end{equation}
The elements
\[
 e_\chi=\frac1q\sum_{u\in Q}\chi(u)^{-1}u
\]
are pairwise orthogonal idempotents with sum $1$, and
$ae_\chi=\chi(a)e_\chi$. Every right $\mathbb C Q $-module
$V$ decomposes as $V=\bigoplus_{\chi\in\widehat Q}Ve_\chi$.
\end{proposition}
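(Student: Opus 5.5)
The plan is to reduce everything to the two orthogonality relations for characters of the finite abelian group $Q$. The rest of the proposition then follows formally from linear algebra over $\mathbb C$.

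First I count $\widehat Q$. By the structure theorem for finite abelian groups, $Q\cong C_{n_1}\times\cdots\times C_{n_k}$ with $q=n_1\cdots n_k$. A character of $C_n$ is determined by the image of a generator, which can be any $n$-th root of unity. So $|\widehat{C_n}|=n$, and these characters separate the points of $C_n$: the character sending the generator to a primitive root is injective. A character of a direct product is the same thing as a tuple of characters of the factors, multiplied together. Hence $|\widehat Q|=q$, and for every $u\neq1$ in $Q$ there is some $\chi$ with $\chi(u)\neq1$. This separation property is the one step that needs real input, namely the structure theorem together with roots of unity in $\mathbb C$; I expect it to be the main obstacle, and everything else is formal.

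Next come the orthogonality relations. If $\chi\neq1$, pick $v$ with $\chi(v)\neq1$. Then
\[
\chi(v)\sum_{u}\chi(u)=\sum_u\chi(u),
\]
so $\sum_{u\in Q}\chi(u)=q\delta_{\chi,1}$. Dually, $\widehat Q$ is a group under pointwise multiplication. If $u\neq1$, choose $\psi$ with $\psi(u)\neq1$; multiplying by $\psi$ permutes $\widehat Q$, which gives $\sum_{\chi}\chi(u)=q\delta_{u,1}$.

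The inversion formula \eqref{eq:arithmetic-fourier} now follows directly. Write $a=\sum_v a_v v$ and use $\chi(u)^{-1}=\chi(u^{-1})$. Then
\[
\sum_\chi\chi(a)\chi(u)^{-1}=\sum_v a_v\sum_\chi\chi(vu^{-1})=q\,a_u .
\]
The evaluation map $a\mapsto(\chi(a))_\chi$ is an algebra homomorphism, because each $\chi$ is multiplicative on $Q$ and extended linearly. It is injective by the inversion formula, and both sides have dimension $q$, so it is an isomorphism.

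For the idempotents, I compute the image of $e_\chi$ under the isomorphism. We have
\[
\psi(e_\chi)=\frac1q\sum_u(\psi\chi^{-1})(u)=\delta_{\psi,\chi}
\]
by the first orthogonality relation. So $e_\chi$ corresponds to the standard coordinate idempotent in $\prod_\chi\mathbb C$. This gives at once that the $e_\chi$ are pairwise orthogonal idempotents with sum $1$. The identity $ae_\chi=\chi(a)e_\chi$ also follows, since both sides have the same image in every coordinate.

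Finally, let $V$ be any right $\mathbb CQ$-module. Each $v\in V$ satisfies $v=\sum_\chi ve_\chi$, so the subspaces $Ve_\chi$ span $V$. The sum is direct: if $\sum_\chi v_\chi e_\chi=0$, multiplying on the right by $e_\psi$ and using orthogonality gives $v_\psi e_\psi=0$. Hence $V=\bigoplus_\chi Ve_\chi$.
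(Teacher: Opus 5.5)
Your proof is correct, but it follows a different route from the paper. The paper gives no argument of its own: it cites the proposition as the abelian case of the general character theory in Linckelmann's book (Theorem~3.3.1 and Corollaries~3.3.2--3.3.3). Those results describe the central primitive idempotents of a split semisimple group algebra via irreducible characters and state the two orthogonality relations, all for an arbitrary finite group. Specialising to abelian $Q$ needs two further facts. First, the irreducible complex characters are exactly the linear characters in $\widehat Q$. Second, their number is $q$, which follows from $\mathbb C Q$ being a commutative split semisimple algebra of dimension $q$.

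You stay entirely inside the abelian setting. The structure theorem for finite abelian groups gives you both $|\widehat Q|=q$ and the fact that characters separate points. The shift trick gives both orthogonality relations, and the inversion formula is read off from the second one. Bijectivity of the evaluation map then comes from injectivity plus a dimension count. After that, the identification of $e_\chi$ with the coordinate idempotents, the identity $ae_\chi=\chi(a)e_\chi$, and the decomposition $V=\bigoplus_\chi Ve_\chi$ are formal.

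Your route is self-contained and avoids Maschke's theorem, Schur's lemma and the Wedderburn decomposition. The cost is using the structure theorem, which the general route does not need. The paper's citation is shorter and presents the statement as a special case of the nonabelian theory. Since the paper only ever applies the proposition to the abelian quotient $G/G'$, your elementary argument covers everything it actually uses.
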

For a right $\mathbb{C} Q$-module $V$, we call
\[
Ve_\chi
=
\{v \in V : vu = \chi(u)v \text{ for every } u \in Q\}
\]
the $\chi$-character component of $V$. Thus it is the
subspace on which each $u \in Q$ acts as multiplication by
$\chi(u)$. In particular, the trivial character component
$Ve_1$ is the subspace of $Q$-invariant vectors. For a complex
of $\mathbb{C} Q$-modules, taking this component in each
degree gives a subcomplex, called its
$\chi$-character component. Note that $\HH_n(X_\bullet e_\chi)\cong \HH_n(X_\bullet)e_\chi$.

The following lemma applies this decomposition to our complexes.

\begin{lemma}\label{lem:character-complex}
Suppose that $Q=G/N$ in Proposition~\ref{prop:normal-coinvariants}
is abelian. The $\chi$-character component
$(C_\bullet \otimes_{\mathbb{Z}} \mathbb{C})e_\chi$
has terms $\mathbb{C}^{r_i}$ and differential matrices
$\chi(D_i)$, where $\chi$ is applied entrywise. Its positive-degree
homology is zero, and its degree-zero homology has dimension
$\delta_{\chi,1}$, where $\delta_{\chi,1}=1$ if $\chi=1$
and $\delta_{\chi,1}=0$ otherwise.
\end{lemma}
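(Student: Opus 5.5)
Since $Q$ is abelian, Proposition~\ref{prop:fourier} applies to the complex $C_\bullet\otimes_\Z\mathbb C$ of right $\mathbb CQ$-modules. The plan is to identify its $\chi$-component explicitly, then read off the homology from Proposition~\ref{prop:normal-coinvariants}.

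First, the terms and differentials. By Proposition~\ref{prop:normal-coinvariants}, $C_i\otimes_\Z\mathbb C\cong(\mathbb CQ)^{r_i}$ as right $\mathbb CQ$-modules, with the chosen $A$-basis. The differential is left multiplication by the matrix $D_i$ with entries in $A\subseteq\mathbb CQ$, acting on coordinate vectors. Since $\mathbb CQ$ is commutative, this is $\mathbb CQ$-linear. Multiplying by $e_\chi$ gives
\[
 (C_i\otimes_\Z\mathbb C)e_\chi\cong(\mathbb CQe_\chi)^{r_i}.
\]
Proposition~\ref{prop:fourier} gives $ae_\chi=\chi(a)e_\chi$, so $\mathbb CQe_\chi=\mathbb Ce_\chi$ is one-dimensional. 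Hence the $\chi$-component in degree $i$ is $\mathbb C^{r_i}$, with basis the basis vectors times $e_\chi$.

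On such a vector, an entry $d$ of $D_i$ acts by $d e_\chi=\chi(d)e_\chi$. Therefore, in these bases, the restricted differential has matrix $\chi(D_i)$, with $\chi$ applied entrywise. The only care needed here is the bookkeeping of the right-module conventions: whether the matrix multiplies rows or columns. This does not affect the conclusion, because all the entries lie in the commutative ring $\mathbb CQ$.

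Second, the homology. Using $\HH_n(X_\bullet e_\chi)\cong\HH_n(X_\bullet)e_\chi$ and Proposition~\ref{prop:normal-coinvariants}, the homology of $C_\bullet\otimes_\Z\mathbb C$ vanishes in positive degrees, so every $\chi$-component also has zero positive-degree homology.

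In degree zero the homology is $\mathbb C$ with trivial $Q$-action. Then $\mathbb Ce_\chi$ is computed from $\chi(e_\chi)$ acting on the trivial module, which is the augmentation
\[
 \varepsilon(e_\chi)=\frac1q\sum_{u\in Q}\chi(u)^{-1}.
\]
By orthogonality of characters (equivalently, \eqref{eq:arithmetic-fourier} applied to $a=1$), this sum equals $1$ if $\chi=1$ and $0$ otherwise. So the degree-zero homology of the $\chi$-component has dimension $\delta_{\chi,1}$.

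Both steps are routine once the decomposition $\mathbb CQ=\prod_\chi\mathbb C$ is available.
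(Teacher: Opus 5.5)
Your proposal is correct and follows essentially the paper's argument: project with $e_\chi$, use $ae_\chi=\chi(a)e_\chi$ to get terms $\mathbb C^{r_i}$ and matrices $\chi(D_i)$, and use that $e_\chi$ commutes with homology and acts on the trivial module by the scalar $\delta_{\chi,1}$. One small correction: \eqref{eq:arithmetic-fourier} with $a=1$ gives $\sum_{\chi}\chi(u)^{-1}=q\,\delta_{u,1}$, which is a sum over characters rather than over $Q$. So you should cite orthogonality of characters (the sum over $Q$), or argue directly that $\varepsilon(e_\chi)e_1=e_\chi e_1=\delta_{\chi,1}e_1$ with $e_1\neq0$.
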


\begin{proof}
Multiplication by $e_\chi$ is an exact projection onto a direct
summand and hence commutes with homology. The assertions follow
from Proposition~\ref{prop:normal-coinvariants} and the identity
$ae_\chi=\chi(a)e_\chi$; on the trivial module, $e_1$ acts
as the identity and all other $e_\chi$ act as zero.
\end{proof}

We will use the standard fact that subgroups of finite free
abelian groups are finite free
\cite[Corollary~4.15(i), p.~163]{Rot}. We also use the Smith normal
form and its description by minors; see for example
\cite[Theorems~2.1(4) and~2.4]{Sta}.

\begin{proposition}\label{prop:smith-minors}
Let $B:\Z^b\to\Z^a$ have rank $t>0$ over $\Q$.
There are integral changes of bases making $B$ diagonal, with
positive nonzero entries $e_1\mid e_2\mid\cdots\mid e_t$.
In particular,
\[
 \operatorname{coker}B\cong
 \Z^{a-t}\oplus\bigoplus_{j=1}^t\Z/e_j\Z,
\]
and
\[
 |\operatorname{tors}(\operatorname{coker}B)|
 =e_1\cdots e_t
 =\gcd\{\text{all }t\times t\text{ minors of }B\}.
\]
The greatest common divisor is positive; at least one of these
minors is nonzero because $B$ has rank $t$.
\end{proposition}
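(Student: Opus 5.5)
The plan is to prove Proposition~\ref{prop:smith-minors} in three steps: first produce the diagonal form, then read off the cokernel, and finally identify the torsion order with the gcd of minors through an invariance argument.

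\emph{Step 1: diagonalisation.} Put $B$ in Smith normal form by the Euclidean algorithm, using elementary integral row and column operations. These are invertible over $\Z$, so they correspond to changes of basis in $\GL_a(\Z)$ and $\GL_b(\Z)$.
\begin{itemize}
\item If $B\neq0$, move an entry of least nonzero absolute value to position $(1,1)$.
\item Use division with remainder to clear the first row and the first column. If a nonzero remainder appears, it has smaller absolute value; move it to the corner and repeat. This terminates because the absolute value of the corner entry strictly decreases.
\item Once the first row and column are cleared, suppose some remaining entry is not divisible by the corner entry. Add its row to the first row and repeat the clearing; again the corner value decreases. Hence we may assume the corner $e_1$ divides every other entry.
\item Make $e_1$ positive and recurse on the lower-right block. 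Every entry of that block is a multiple of $e_1$, and this persists under integral operations, so the recursion yields $e_1\mid e_2\mid\cdots$.
\end{itemize}
The number of nonzero diagonal entries equals the rank over $\Q$, because the base changes are also invertible over $\Q$. So there are exactly $t$ of them.

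\emph{Step 2: the cokernel.} In the new bases, $\operatorname{coker}B$ is $\Z^a$ modulo the subgroup $\bigoplus_{j\le t} e_j\Z$ placed in the first $t$ coordinates. This gives $\Z^{a-t}\oplus\bigoplus_j \Z/e_j\Z$. The torsion subgroup is the finite part, of order $e_1\cdots e_t$.

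\emph{Step 3: the minor description.} Let $d_t(M)$ denote the gcd of all $t\times t$ minors of $M$. The key claim is that $d_t(UBV)=d_t(B)$ for $U\in\GL_a(\Z)$ and $V\in\GL_b(\Z)$.
\begin{itemize}
\item By the Cauchy--Binet formula, every $t\times t$ minor of $UBV$ is an integral linear combination of $t\times t$ minors of $B$. Hence $d_t(B)\mid d_t(UBV)$.
\item Applying the same argument with $U^{-1}$ and $V^{-1}$ gives the reverse divisibility.
\end{itemize}
For the diagonal matrix $\operatorname{diag}(e_1,\dots,e_t,0,\dots)$, the only nonzero $t\times t$ minor, up to sign, is $e_1\cdots e_t$. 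Therefore $d_t(B)=e_1\cdots e_t>0$. In particular $B$ has a nonzero $t\times t$ minor, which is also immediate from the rank being $t$.

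The only step requiring real care is the termination and divisibility argument in Step~1. I would handle it by the strict decrease of the corner's absolute value, as described there. The rest is routine bookkeeping; alternatively, one may simply cite \cite{Sta}.
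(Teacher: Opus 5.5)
Your proof is correct: the Euclidean reduction to Smith normal form, reading off the cokernel from the diagonal form, and the Cauchy--Binet argument that the gcd of the $t\times t$ minors is unchanged under multiplication by integral invertible matrices on either side are all sound. The paper gives no proof of its own here and simply cites \cite[Theorems~2.1(4) and~2.4]{Sta}; your argument is the standard proof behind those cited results.
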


The finite ranks of the bar coinvariant complex imply that
$\HH_i(G,\Z)$ is finitely generated. The transfer theorem,
applied to the trivial subgroup, shows that $|G|$ annihilates
$\HH_i(G,\Z)$ for $i>0$; see
\cite[Theorem~9.94, p.~577]{Rot}. These two facts give the
standard finiteness result below.

\begin{lemma}\label{lem:finite-homology}
For every finite group $G$, the group $\HH_i(G,\Z)$ is finite
for each $i>0$.
\end{lemma}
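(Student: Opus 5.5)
The statement to prove is Lemma~\ref{lem:finite-homology}: $\HH_i(G,\Z)$ is finite for every $i>0$. The plan has two parts: show $\HH_i(G,\Z)$ is finitely generated, then show it is annihilated by $|G|$.

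\emph{Finite generation.} I will compute homology with the complex $Y_\bullet=P_\bullet\otimes_{\Z G}\Z$ from Proposition~\ref{prop:normal-coinvariants}, taking $N=1$ (or directly $Y_\bullet$). By that proposition $Y_i\cong\Z^{r_i}$ with $r_i=|G|^i$, and $\HH_i(Y_\bullet)=\HH_i(G,\Z)$. The subgroup $\ker b_i\subseteq\Z^{r_i}$ is finite free, by the cited fact that subgroups of finite free abelian groups are finite free. Hence its quotient $\HH_i(G,\Z)=\ker b_i/\im b_{i+1}$ is finitely generated.

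\emph{Annihilation by $|G|$.} Here I invoke the transfer for the trivial subgroup $H=1$. Corestriction composed with restriction, $\HH_i(G,\Z)\to\HH_i(1,\Z)\to\HH_i(G,\Z)$, is multiplication by $|G:1|=|G|$. For $i>0$ the middle group $\HH_i(1,\Z)$ is zero, because $\Z$ is a free $\Z$-module and so has no higher Tor. Therefore $|G|\cdot\HH_i(G,\Z)=0$ for $i>0$; this is the cited \cite[Theorem~9.94]{Rot}.

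\emph{Conclusion.} A finitely generated abelian group annihilated by a positive integer $n$ is a quotient of $(\Z/n\Z)^k$ for some $k$, and so it is finite. There is no real obstacle beyond citing the transfer correctly. One point to check is that the bar complex has finite rank in each degree, which holds because $G$ is finite.
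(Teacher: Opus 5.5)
Your proposal is correct and follows the paper's argument exactly. Finite generation comes from the coinvariant bar complex $Y_\bullet$ having finite free terms, the exponent bound comes from the transfer through the trivial subgroup showing $|G|$ annihilates $\HH_i(G,\Z)$ for $i>0$, and finiteness follows by combining the two. You simply spell out the details the paper leaves to its citations, namely that subgroups of $\Z^{r_i}$ are finite free and that $\HH_i(1,\Z)=0$ for $i>0$.
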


To apply Proposition~\ref{prop:smith-minors} to a homology group,
we use the following elementary observation.

\begin{lemma}\label{lem:homology-cokernel}
Let $Y_\bullet$ be a complex of finite free abelian groups in
nonnegative degrees, with differentials $b_i$ and $b_0=0$.
For $n\geq1$, there is a canonical exact sequence
\[
 0\longrightarrow\HH_{n-1}(Y_\bullet)
 \longrightarrow\operatorname{coker}b_n
 \longrightarrow\im b_{n-1}\longrightarrow0.
\]
If $\HH_{n-1}(Y_\bullet)$ is finite,  then $\HH_{n-1}(Y_\bullet)$ is isomorphic to $\operatorname{tors}(\operatorname{coker}b_n)$.
\end{lemma}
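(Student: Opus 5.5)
The plan is to build the exact sequence from the standard inclusions $\im b_n\subseteq\ker b_{n-1}\subseteq Y_{n-1}$, and then to use finiteness to identify the torsion part. Here $b_{n-1}$ is interpreted as the zero map when $n=1$.

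\textbf{Step 1: the exact sequence.} Since $b_{n-1}b_n=0$, the map $b_{n-1}$ factors through $\operatorname{coker}b_n=Y_{n-1}/\im b_n$, giving a surjection $\operatorname{coker}b_n\to\im b_{n-1}$. Its kernel is $\ker b_{n-1}/\im b_n=\HH_{n-1}(Y_\bullet)$. The map is induced by $b_{n-1}$ and involves no choices, so the resulting sequence is canonical.

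\textbf{Step 2: $\im b_{n-1}$ is free.} The group $\im b_{n-1}$ is a subgroup of the finite free abelian group $Y_{n-2}$. By the cited fact that subgroups of finite free abelian groups are finite free \cite[Corollary~4.15(i), p.~163]{Rot}, it is finite free; when $n=1$ it is $0$. In particular $\im b_{n-1}$ is torsion-free, and the sequence splits. This gives
\[
\operatorname{coker}b_n\cong\HH_{n-1}(Y_\bullet)\oplus\im b_{n-1}.
\]

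\textbf{Step 3: identifying the torsion.} Assume $\HH_{n-1}(Y_\bullet)$ is finite. The torsion subgroup of the direct sum above is $\operatorname{tors}(\HH_{n-1}(Y_\bullet))\oplus\operatorname{tors}(\im b_{n-1})=\HH_{n-1}(Y_\bullet)\oplus0$. This can also be seen without invoking the splitting. The image of $\HH_{n-1}(Y_\bullet)$ in $\operatorname{coker}b_n$ is finite, hence contained in the torsion. Conversely, any torsion element of $\operatorname{coker}b_n$ maps to a torsion element of the torsion-free group $\im b_{n-1}$, hence to $0$, so it lies in the kernel $\HH_{n-1}(Y_\bullet)$. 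Either way, $\HH_{n-1}(Y_\bullet)\cong\operatorname{tors}(\operatorname{coker}b_n)$.

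The argument contains no real obstacle. The only points needing care are the degenerate case $n=1$, where $b_0=0$ so that $\im b_0=0$, and the torsion-freeness of $\im b_{n-1}$, which is supplied by the cited subgroup fact.
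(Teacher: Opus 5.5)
Your proposal is correct and follows essentially the same route as the paper. The paper obtains the exact sequence from the snake lemma applied to $\im b_n\subseteq\ker b_{n-1}\subseteq Y_{n-1}$, where you write down the induced map directly; both then use that $\im b_{n-1}$ is free abelian to place all torsion of $\operatorname{coker}b_n$ in the image of $\HH_{n-1}(Y_\bullet)$, and finiteness of $\HH_{n-1}(Y_\bullet)$ for the reverse inclusion.
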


\begin{proof}
The first statement follows from the snake lemma applied to $\im b_n\hookrightarrow \ker b_{n-1}$ and $0\rightarrow \ker b_{n-1}\rightarrow Y_{n-1}\rightarrow \im b_{n-1}\rightarrow 0$.
$\im b_{n-1}$ is free abelian, so all torsion of $\operatorname{coker} b_n$ lies in  $Z:=\im(\HH_{n-1}(Y_\bullet)
\rightarrow\operatorname{coker}b_n)$. So if $Z$ is finite, all its elements are torsion. 

\end{proof}
Let $Y_\bullet=P_\bullet\otimes_{\Z G}\Z$ as in Proposition~\ref{prop:normal-coinvariants}. Since
$\Z$ has trivial $G$-action, every $\Z G$-linear map
$P_i\to\Z$ factors uniquely through the coinvariants. Thus
there is an isomorphism of cochain complexes
\[
 \Hom_{\Z G}(P_\bullet,\Z)
 \cong\Hom_\Z(Y_\bullet,\Z),
\]
whose cohomology is $\HH^n(G,\Z)$. The terms of $Y_\bullet$
are free abelian, and so are its boundary groups, by
\cite[Corollary~4.15(i), p.~163]{Rot}. The universal coefficient
theorem \cite[Theorem~7.59(i), \mbox{pp.~451--452}]{Rot}
therefore applies and gives, for $n\geq1$,
\[
 0\longrightarrow\Ext^1_\Z(\HH_{n-1}(G,\Z),\Z)
 \longrightarrow\HH^n(G,\Z)
 \longrightarrow\Hom_\Z(\HH_n(G,\Z),\Z)
 \longrightarrow0.
\]
For $n\geq2$, Lemma~\ref{lem:finite-homology} makes the
rightmost term zero and the homology in the leftmost term finite.
Together with the standard computation of $\Ext^1_\Z(T,\Z)$
for finite abelian $T$
\cite[Example~7.23(i)--(ii), p.~420]{Rot}, this gives the
following comparison.

\begin{proposition}\label{prop:integral-comparison}
For every finite group $G$ and $n\geq2$, there is a natural
isomorphism
\[
 \HH^n(G,\Z)\cong\Ext^1_\Z(\HH_{n-1}(G,\Z),\Z).
\]
In particular, $\HH^n(G,\Z)$ and $\HH_{n-1}(G,\Z)$ are
finite and noncanonically isomorphic, and
\[
 |\HH^n(G,\Z)|=|\HH_{n-1}(G,\Z)|.
\]
\end{proposition}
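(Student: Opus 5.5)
The plan is to read the proposition off the universal coefficient sequence displayed just before it. The inputs are the finiteness statement of Lemma~\ref{lem:finite-homology} and the elementary computation of $\Ext^1_\Z(T,\Z)$ for a finite abelian group $T$.

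First I identify cohomology with the cohomology of the dual of the coinvariant complex. Since $\Z$ is a trivial module, every $\Z G$-linear map $P_i\to\Z$ factors uniquely through $Y_i=P_i\otimes_{\Z G}\Z$. This gives an isomorphism of cochain complexes $\Hom_{\Z G}(P_\bullet,\Z)\cong\Hom_\Z(Y_\bullet,\Z)$. By Lemma~\ref{lem:bar-resolution} the left-hand side computes $\HH^n(G,\Z)$, and by Proposition~\ref{prop:normal-coinvariants} with $N=G$ the homology of $Y_\bullet$ is $\HH_n(G,\Z)$. Each $Y_i\cong\Z^{r_i}$ is finite free, and so are its subgroups of cycles and boundaries, by the cited fact on subgroups of finite free abelian groups. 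The universal coefficient theorem therefore applies and yields, for $n\geq1$, the natural short exact sequence
\[
0\to\Ext^1_\Z(\HH_{n-1}(G,\Z),\Z)\to\HH^n(G,\Z)\to\Hom_\Z(\HH_n(G,\Z),\Z)\to0 .
\]

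Next I take $n\geq2$. Then both $n$ and $n-1$ are positive, so Lemma~\ref{lem:finite-homology} makes $\HH_n(G,\Z)$ and $\HH_{n-1}(G,\Z)$ finite. The right-hand term vanishes, because a finite group admits no nonzero homomorphism to the torsion-free group $\Z$. Hence the left map is an isomorphism, and it is natural in $G$ because the universal coefficient sequence is natural.

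It remains to compare $\Ext^1_\Z(T,\Z)$ with $T$ for finite abelian $T$, here $T=\HH_{n-1}(G,\Z)$. Write $T\cong\bigoplus_j\Z/e_j\Z$ by the structure theorem; $\Ext^1_\Z(-,\Z)$ is additive, so it suffices to treat one cyclic summand. Applying $\Hom_\Z(-,\Z)$ to $0\to\Z\xrightarrow{e}\Z\to\Z/e\Z\to0$ gives $\Ext^1_\Z(\Z/e\Z,\Z)\cong\Z/e\Z$. Hence $\Ext^1_\Z(T,\Z)\cong T$, noncanonically, since the isomorphism depends on the chosen cyclic decomposition. In particular $\HH^n(G,\Z)$ is finite and $|\HH^n(G,\Z)|=|\HH_{n-1}(G,\Z)|$.

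There is no real obstacle here. The only points needing care are that the universal coefficient theorem requires the boundary subgroups of $Y_\bullet$ to be free, which the subgroup fact supplies, and that the restriction $n\geq2$ is needed so that $\HH_{n-1}(G,\Z)$ with $n-1\geq1$ is finite. For $n=1$ the argument fails: $\HH_0(G,\Z)=\Z$ and $\HH^1(G,\Z)=0$.
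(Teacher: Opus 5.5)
Your proposal is correct and follows the paper's argument essentially verbatim. You identify $\Hom_{\Z G}(P_\bullet,\Z)$ with $\Hom_\Z(Y_\bullet,\Z)$, apply the universal coefficient theorem to the free complex $Y_\bullet$, kill the $\Hom_\Z(\HH_n(G,\Z),\Z)$ term using the finiteness from Lemma~\ref{lem:finite-homology} for $n\geq2$, and conclude with $\Ext^1_\Z(T,\Z)\cong T$ for finite abelian $T$. Your explicit computation of $\Ext^1_\Z(\Z/e\Z,\Z)$ and your remark on why $n=1$ must be excluded are accurate details that the paper leaves to citation.
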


We conclude this section by recalling the following standard description and nonvanishing criterion for first cohomology with trivial coefficients.
\begin{lemma}\label{remarkone} Let $G$ be a finite group and $K$ a field of characteristic $p>0$, regarded as a trivial $KG$-module.
    \begin{enumerate}
        \item Denoting by $K^+$ the additive group with underlying set $K$, the following isomorphism holds: $\HH^1(G, K)\cong \Hom(G, K^+)$. 
\item $\Hom(G, K^+)$ is nonzero if and only if $p\mid|G/G'|$. 
    \end{enumerate}
\end{lemma}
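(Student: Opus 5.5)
Part (1) is a universal property of the abelianization. Part (2) then reduces to a statement about finite abelian groups.

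For (1), I use the standard description of first cohomology with trivial coefficients. Using the bar resolution of Lemma~\ref{lem:bar-resolution} tensored with $K$, a $1$-cocycle is a function $f\colon G\to K$ satisfying $f(gh)=f(g)+f(h)$. The coboundaries are the maps $g\mapsto a\cdot g-a$, and these vanish because the action is trivial. Hence $\HH^1(G,K)=Z^1=\Hom(G,K^+)$. Alternatively, from $0\to I\to KG\to K\to0$ one gets
\[
\Ext^1_{KG}(K,K)\cong\Hom_{KG}(I,K)\cong\Hom_K(I/I^2,K),
\]
and then uses $I/I^2\cong K\otimes_\Z G/G'$. I would use the cocycle description, since it is the most direct.

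For (2), note that $K^+$ is abelian, so every homomorphism $G\to K^+$ factors uniquely through $G/G'$. Therefore
\[
\Hom(G,K^+)=\Hom(G/G',K^+).
\]
Write $A=G/G'$ and decompose it as $\bigoplus_j \Z/n_j\Z$. Then
\[
\Hom(A,K^+)\cong\bigoplus_j\{x\in K: n_jx=0\}.
\]
In characteristic $p$, the condition $n_jx=0$ has a nonzero solution exactly when $p\mid n_j$. Indeed, if $p\nmid n_j$ then $n_j$ is invertible in $K$, so $x=0$. If $p\mid n_j$, then every $x\in K$ is a solution, in particular $x=1$.

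Consequently, $\Hom(A,K^+)\neq0$ if and only if $p$ divides some $n_j$. This happens if and only if $p\mid |A|=|G/G'|$, which proves (2).

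No step here is a real obstacle. The only point needing care is identifying the cocycles with homomorphisms via the chosen resolution, that is, verifying that the coboundaries vanish for trivial coefficients.
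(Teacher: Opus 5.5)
Your proof is correct and essentially the paper's argument: part (1) is the same cocycle computation with vanishing coboundaries, and part (2) likewise reduces to $\Hom(G/G',K^+)$. The only difference is that you compute this group from a cyclic decomposition of $G/G'$, getting $\Hom(G,K^+)\cong K^r$ with $r$ the number of cyclic factors of order divisible by $p$, whereas the paper argues that a nonzero image is an elementary abelian $p$-group and that a $C_p$ quotient of $G/G'$ embeds into $K^+$; both routes work.
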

\begin{proof}
For (1), note that since the action of $G$ on $K$ is trivial, a $1$-cocycle is a map $f:G\to K$
with $f(gh)=f(g)+f(h)$, and every $1$-coboundary is zero (see for instance \cite[Definition~1.2.7 and the following paragraph,
p.~17]{Lin}).

For (2), every homomorphism $G\to K^+$ factors through $G/G'$,
since $K^+$ is abelian. The image of any nonzero such homomorphism is  a nontrivial
finite elementary abelian $p$-group, and hence $p\mid|G/G'|$. 
Conversely, if $p\mid |G/G'|$, then the finite abelian group $G/G'$ has a quotient isomorphic to $C_p$, which embeds into $K^+$. Composing these maps yields a nonzero homomorphism $G\rightarrow K^+$.
\end{proof}
\section{Proofs of the theorem and its corollaries}
We begin with an elementary dimension calculation for homology that will be used in the main proof.

\begin{lemma}\label{lem:homology-dimension}
Let $F$ be a field and let $X_\bullet$ be a complex of
finite-dimensional $F$-vector spaces in nonnegative degrees,
with differentials $b_n:X_n\to X_{n-1}$ and $b_0:X_0\to0$.
Put $r_n=\dim_F X_n$ and $s_n=\rank_F b_n$ for $n\geq0$,
so that $s_0=0$. Then, for every $n\geq0$,
\begin{equation}\label{eq:homology-dimension}
 \dim_F\HH_n(X_\bullet)=r_n-s_n-s_{n+1}.
\end{equation}
In particular, if $\HH_n(X_\bullet)=0$ for every $n>0$ and
$h=\dim_F\HH_0(X_\bullet)$, then
\[
 s_1=r_0-h,\qquad
 s_{n+1}=r_n-s_n\quad(n\geq1).
\]
\end{lemma}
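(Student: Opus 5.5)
The plan is to apply rank–nullity to each differential and then use the hypotheses to get a recursion.

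First, fix $n\geq0$. Since $\HH_n(X_\bullet)=\ker b_n/\im b_{n+1}$ and $\im b_{n+1}\subseteq\ker b_n$, we have
\[
 \dim_F\HH_n(X_\bullet)=\dim_F\ker b_n-\dim_F\im b_{n+1}.
\]
Rank–nullity for $b_n\colon X_n\to X_{n-1}$ gives $\dim_F\ker b_n=r_n-s_n$. For $n=0$ this reads $\ker b_0=X_0$ with $s_0=0$, which is consistent. By definition $\dim_F\im b_{n+1}=s_{n+1}$. Substituting these two values yields \eqref{eq:homology-dimension}. Everything is finite-dimensional, so no subtleties arise.

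For the second assertion, take $n=0$ in \eqref{eq:homology-dimension}. This gives $h=r_0-0-s_1$, that is, $s_1=r_0-h$. For $n\geq1$, the hypothesis $\HH_n(X_\bullet)=0$ turns \eqref{eq:homology-dimension} into $0=r_n-s_n-s_{n+1}$, so $s_{n+1}=r_n-s_n$.

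There is no real obstacle in this argument. The only point needing care is the convention $b_0=0$ in degree zero, which the statement already fixes through $s_0=0$.
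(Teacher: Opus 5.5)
Your proof is correct and is essentially the paper's argument: rank--nullity gives $\dim_F\ker b_n=r_n-s_n$, and $\dim_F\im b_{n+1}=s_{n+1}$ holds by definition. The recursion then follows by taking $n=0$ (using $s_0=0$) and, for $n\geq1$, using the vanishing of homology.
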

\begin{proof}
Since $\im b_{n+1}\subseteq\ker b_n$, rank--nullity gives
\[
 \dim_F\HH_n(X_\bullet)
 =\dim_F\ker b_n-\dim_F\im b_{n+1}
 =r_n-s_n-s_{n+1}.
\]
Taking $n=0$ and using $s_0=0$ gives $s_1=r_0-h$.
In positive degrees, vanishing of homology gives
$s_{n+1}=r_n-s_n$.
\end{proof}

We have now all the tools necessary to prove Theorem~\ref{thm:main}.

\begin{proof}[Proof of Theorem~\ref{thm:main}]
By Proposition~\ref{prop:integral-comparison},
$|\HH^{2m}(G,\Z)|=|\HH_{2m-1}(G,\Z)|$ for every $m\geq1$.
It therefore suffices to prove that

$$|G/G'|\mid |\HH_{2m-1}(G,\mathbb Z)|
\qquad\text{for every }m\ge 1.
$$
Put $Q=G/G'$, $q=|Q|$, and $A=\Z Q$. If $q=1$, the
divisibility is immediate, so assume $q>1$.
The key point will be that, in even degrees, the differential
on the trivial character component has rank one greater than
on each nontrivial component. This will force the relevant
integral minors to be divisible by $q$.
Apply Proposition~\ref{prop:normal-coinvariants} with $N=G'$.
It gives the complexes
\[
 C_\bullet=P_\bullet\otimes_{\Z G'}\Z,
 \qquad
 Y_\bullet=C_\bullet\otimes_A\Z
 \cong P_\bullet\otimes_{\Z G}\Z,
\]
with $C_i\cong A^{r_i}$ and $Y_i\cong\Z^{r_i}$, where
$r_i=|G|^i$. Choose free $A$-bases of the $C_i$, write $D_i$
for the differential matrices, and put $B_i=\varepsilon(D_i)$, with $\varepsilon: A\rightarrow \Z$ the augmentation map.
Thus $B_i$ is the differential matrix of $Y_\bullet$,
$B_0=0$, and $\HH_i(Y_\bullet)=\HH_i(G,\Z)$.
We identify these matrices with the homomorphisms they represent.
For $\chi\in\widehat Q$, set
$s_i(\chi)=\rank_{\mathbb C}\chi(D_i)$ for $i\geq1$ and
$s_0(\chi)=0$. Apply Lemma~\ref{lem:homology-dimension} to
\[
 X_\bullet=(C_\bullet\otimes_\Z\mathbb C)e_\chi.
\]
By Lemma~\ref{lem:character-complex}, its terms have dimensions
$r_i$, its differentials have matrices $\chi(D_i)$, and its
homology is zero in positive degrees and has dimension
$\delta_{\chi,1}$ in degree zero. Thus
\eqref{eq:homology-dimension} reads
\[
 \dim_{\mathbb C}\HH_n(X_\bullet)
 =r_n-s_n(\chi)-s_{n+1}(\chi)\qquad(n\geq0).
\]
In degree zero, this gives
\[
 s_1(\chi)=r_0-\delta_{\chi,1},
\]
and in degree $i-1>0$, it gives
\[
 s_i(\chi)=r_{i-1}-s_{i-1}(\chi)
\]
For any nontrivial $\chi$, the first rank difference is
$s_1(1)-s_1(\chi)=(r_0-1)-r_0=-1$.
Since $r_{i-1}$ is independent of $\chi$, subtracting the
two recurrences gives
\[
 s_i(1)-s_i(\chi)
 =-\bigl(s_{i-1}(1)-s_{i-1}(\chi)\bigr)\qquad(i\geq2).
\]
Induction therefore yields
\[
 s_i(1)-s_i(\chi)=(-1)^i\qquad(i\geq1,\ \chi\neq1).
\]

Fix \(m\geq1\) and put \(t=s_{2m}(1)\). By the preceding identity,
\[
s_{2m}(\chi)=t-1
\qquad\text{for every nontrivial character }\chi.
\]
Since \(Q\) is abelian and \(q>1\), such a character exists. As \(s_{2m}(\chi)\) is a matrix rank, it is nonnegative, and hence \(t\geq1\).
Evaluation at the trivial character
is augmentation, so
\[
 \rank_{\Q}B_{2m}=\rank_{\mathbb C}B_{2m}=t,
 \qquad
 \rank_{\mathbb C}\chi(D_{2m})=t-1\quad(\chi\neq1).
\]
The first two ranks agree because the minors of $B_{2m}$ are
integers, and a minor is nonzero over $\Q$ exactly when it is
nonzero over $\mathbb C$.

The ring $A$ is commutative because $Q$ is abelian, so
determinants over $A$ are defined. Take any $t\times t$ minor
$a\in A$ of $D_{2m}$. Since determinants are preserved under homomorphisms of commutative rings, evaluation at $\chi$ commutes with taking minors. The preceding rank computation therefore shows
that $\chi(a)=0$ for every $\chi\neq1$.
Writing $a=\sum_{u\in Q}a_u u$, Fourier inversion
\eqref{eq:arithmetic-fourier} gives
\[
 a_u=\frac{\varepsilon(a)}q\qquad(u\in Q).
\]
All these coefficients are integers. Thus there is an integer
$z$ such that $a=z\sum_{u\in Q}u$, and
$\varepsilon(a)=zq$ is divisible by $q$.

Augmentation also commutes with determinants, so each
$t\times t$ minor of $B_{2m}$ is the augmentation of the
corresponding minor of $D_{2m}$. Every such minor is therefore
divisible by $q$. At least one is nonzero, since $B_{2m}$ has
rank $t$.
Proposition~\ref{prop:smith-minors} identifies the positive
greatest common divisor of these minors with the order of
$\operatorname{tors}(\operatorname{coker}B_{2m})$. Hence
\[
 q\mid\bigl|\operatorname{tors}
                   (\operatorname{coker}B_{2m})\bigr|.
\]
The group $\HH_{2m-1}(G,\Z)$ is finite by
Lemma~\ref{lem:finite-homology}. Applying
Lemma~\ref{lem:homology-cokernel} to $Y_\bullet$ therefore gives
\[
 \operatorname{tors}(\operatorname{coker}B_{2m})
 \cong\HH_{2m-1}(G,\Z).
\]
It follows that $q\mid|\HH_{2m-1}(G,\Z)|$, as required.
Together with the homology--cohomology comparison at the start
of the proof, this proves the theorem.
\end{proof}

We can now prove our first corollary:

\begin{proof}[Proof of Corollary~\ref{cor:modules}]
By assumption, $0\neq \Ext_{KG}^1(K, K)=\HH^1(G, K)$. By Lemma~\ref{remarkone}, $\Hom(G, K^+)\neq 0$ and so $p\mid |G/G'|$.
Theorem~\ref{thm:main} therefore implies that
$p\mid|\HH^{2m}(G,\Z)|$ for every $m\geq1$.
Recall that for every finite abelian group $T$ whose order is divisible by $p$,
both $T/pT$ and $T[p]=\{x\in T:px=0\}$ are nonzero. Indeed since $T$ is finite and $p\mid |T|$, Cauchy's theorem gives $T[p]\neq 0$. Moreover, multiplication by $p$ on $T$ is thus neither injective nor surjective since $T$ is finite. Applying this fact to $T=\HH^{2m}(G,\Z)$ we get
\begin{equation}
    \HH^{2m}(G,\Z)[p]\neq 0, \quad \HH^{2m}(G,\Z)/p\HH^{2m}(G,\Z)\neq 0, \ \forall m\geq 1. \label{eqtwo}
\end{equation}

Use the bar resolution $P_\bullet\to\Z$ from
Lemma~\ref{lem:bar-resolution}. Since each term $P_i$ is projective, the functor $\Hom_{\Z G}(P_i, -)$ is exact and so applying it to the exact sequence
$0\to\Z\xrightarrow{p}\Z\to\Fp\to0$ yields a short exact sequence
\[0\rightarrow \Hom_{\Z G}(P_\bullet, \Z)\xrightarrow{\cdot p} \Hom_{\Z G}(P_\bullet, \Z) \rightarrow \Hom_{\Z G}(P_\bullet, \Fp)\rightarrow 0 \]
of cochain complexes. By Lemma~\ref{lem:bar-resolution},
$P_\bullet\otimes_\Z\Fp\to\Fp$ is a free $\Fp G$-resolution,
and tensor--Hom adjunction identifies
\[
 \Hom_{\Z G}(P_\bullet,\Fp)
 \cong\Hom_{\Fp G}(P_\bullet\otimes_\Z\Fp,\Fp).
\]
Its cohomology is therefore
$\HH^j(G,\Fp)=\Ext^j_{\Fp G}(\Fp,\Fp)$.
The long exact sequence in cohomology (see for example \cite[Proposition~2.5.3(ii), p.~36]{Ben})
now gives
\[
 \HH^j(G,\Z)\xrightarrow{\cdot p}\HH^j(G,\Z)
 \longrightarrow\HH^j(G,\Fp)
 \longrightarrow\HH^{j+1}(G,\Z)
 \xrightarrow{\cdot p}\HH^{j+1}(G,\Z), \quad j\geq 1.
\]
Taking the cokernel of the first
map and the kernel of the last map gives, for $j\geq1$,
\begin{equation}\label{eq:arithmetic-modp}
 0\longrightarrow\HH^j(G,\Z)/p\HH^j(G,\Z)
 \longrightarrow\HH^j(G,\Fp)
 \longrightarrow\HH^{j+1}(G,\Z)[p]\longrightarrow0.
\end{equation}
By \eqref{eqtwo}, for even $j$ the left term is nonzero, and for odd $j$ the right
term is nonzero. Hence $\HH^j(G,\Fp)\neq0$ for every $j\geq1$.
Since the terms of the bar resolution are finite free, extension
of scalars gives an isomorphism of cochain complexes
\[
 \Hom_{KG}(P_\bullet\otimes_\Z K,K)
 \cong K\otimes_{\Fp}
       \Hom_{\Fp G}(P_\bullet\otimes_\Z\Fp,\Fp).
\]
Lemma~\ref{lem:bar-resolution} identifies their cohomology with
group cohomology. Since $K$ is faithfully flat over $\Fp$,
tensoring with $K$ commutes with cohomology and preserves
nonzero vector spaces. Consequently,
\[
 \HH^j(G,K)\cong K\otimes_{\Fp}\HH^j(G,\Fp)\neq0
 \qquad(j\geq1).
\]
This proves (1).

To show (2), consider $\End_K(M)$ with the right $G$-action given by $(f\cdot g)(m)=f(mg^{-1})g$. Since $M$ is projective over $K$, the  tensor--Hom adjunction
\cite[Proposition~3.1.8(iii), pp.~53--54]{Ben} gives
\begin{equation}
    \Ext^n_{KG}(M,M)\cong\HH^n(G,\End_K(M)). \label{eqone}
\end{equation}

We claim that $K$ is a direct summand of $\End_K(M)$ as $KG$-module.  To show that put $d=\dim_K M$. Since $p\nmid d$, $d$ is invertible in $K$ and since the trace is invariant under conjugation, the maps 
\[
 \iota:K\longrightarrow\End_K(M),\quad a\longmapsto a\,\mathrm{id}_M,
 \qquad
 r:\End_K(M)\longrightarrow K,\quad f\longmapsto d^{-1}\tr(f)
\]
are $KG$-linear and $r\circ \iota=\id_K$. It follows that $\End_K(M)\cong K\oplus \ker r$.
It follows that $\HH^n(G,K)= \Ext_{KG}^n(K, K)$ is a direct summand of $\Ext_{KG}^n(K, \End_K(M))=\HH^n(G,\End_K(M))$. By \eqref{eqone} and part (1), we get that $\Ext^n_{KG}(M,M)\neq 0$ for all $n\geq 1$.
\end{proof}
For the next proof recall that every element $g$ in a group $G$ can be written in a unique way as $g=g_{p} g_{p'}$, where $g_p$ has order a power of $p$ and the order of $g_{p'}$ is prime to $p$ such that $g_p$ and $g_{p'}$ commute. In that case $g_p$ is called the $p$-part of $g$.
\begin{proof}[Proof of Corollary~\ref{cor:hochschild}]

Since $p\mid|G|$, the main result of \cite{FJL}, also discussed
in \cite{Mur}, yields an element $x\in G$ whose order is
divisible by $p$ and whose $p$-part is not contained in $C_G(x)'$.
Put $C=C_G(x)$ and write $x_p$ for the $p$-part of $x$.
The image $x_pC'$ is a nonidentity element of $p$-power order
in $C/C'$, so $p\mid|C/C'|$. Lemma~\ref{remarkone} therefore
gives $\HH^1(C,K)\cong\Hom(C,K^+)\neq0$ (see also \cite[Proposition~2.10]{Mur}).
Thus
Corollary~\ref{cor:modules}(1), applied to $C$, gives
\[
 \HH^n(C,K)\neq0\qquad(n\geq1).
\]
 The centraliser decomposition for Hochschild cohomology
\cite[Theorem~2.11.2]{Ben2} yields an isomorphism of $K$-vector spaces
\[
 \Hoch^n(KG)\cong
 \bigoplus_{g\in\mathcal R}\HH^n(C_G(g),K)
 \qquad(n\geq 0),
\]
where $\mathcal R$ is a set of representatives of the conjugacy
classes of $G$, chosen to contain $x$.
 The summand indexed by $x$ is $\HH^n(C,K)$, which is nonzero
for every $n\geq1$. Hence $\Hoch^n(KG)\neq0$ in every positive
degree. Finally,
$\Hoch^0(KG)=Z(KG)\neq0$, since the center contains the identity. 
\end{proof}
To conclude, we remark that the following conjecture due to  Erdmann, Kl\'asz and Marczinzik \cite[Conjecture S]{EKM} is still open:
\begin{conjecture}
Let $G$ be a finite group with group algebra $KG$ for a field $K$ and let $S$ be a simple $KG$-module. If $\Ext_{KG}^1(S,S) \neq 0$, then $\Ext_{KG}^i(S,S) \neq 0$ for all $i \geq 1$.
\end{conjecture}

\section*{Acknowledgements}
The main result was found by experimenting using GAP~\cite{GAP}. The original conjectures in \cite{EKM} also profited from experiments with the GAP package 
QPA~\cite{GS} and Magma~\cite{BCP}.

\section*{Statement on the use of AI.}
The proofs for this article were found with the help of ChatGPT Astra based on computer experiments by the authors. 
ChatGPT assisted with selected aspects of the technical development, including
the formulation of auxiliary results and the elaboration of technical details,
as well as with literature searches, consistency checks, and LaTex
preparation. All AI-generated suggestions were independently reviewed and
verified by the authors, who assume full responsibility for the mathematical
arguments, results, and final content of the paper.

\end{document}